%
\documentclass[12pt, reqno]{amsart}
\usepackage{amsmath, amsthm, amscd, amsfonts, amssymb, graphicx, color, mathrsfs}
\usepackage[bookmarksnumbered, colorlinks, plainpages]{hyperref}
\usepackage[all]{xy}
\usepackage{slashed}

\usepackage{soul}
\usepackage{cancel}
\usepackage{ulem}

\textheight 22.5truecm \textwidth 14.5truecm
\setlength{\oddsidemargin}{0.35in}\setlength{\evensidemargin}{0.35in}

\setlength{\topmargin}{-.5cm}

\newtheorem{theorem}{Theorem}[section]

\theoremstyle{definition}

\theoremstyle{remark}
\newtheorem{remark}[theorem]{Remark}
\numberwithin{equation}{section}

\begin{document}
\setcounter{page}{1}

\title[  On the  nuclear trace  of Fourier integral  operators in $L^p$-spaces ]{ Nuclearity for Fourier integral  operators in $L^p$-spaces}

\author[D. Cardona]{Duv\'an Cardona}
\address{
  Duv\'an Cardona:
  \endgraf
  Department of Mathematics  
  \endgraf
  Pontificia Universidad Javeriana
  \endgraf
  Bogot\'a
  \endgraf
  Colombia
  \endgraf
  {\it E-mail address} {\rm d.cardona@uniandes.edu.co;
duvanc306@gmail.com}
  }

\subjclass[2010]{Primary {35S30; Secondary 58J40}.}

\keywords{Fourier integral operator, Pseudo-differential operator,  nuclear operator, nuclear trace, spectral trace, compact Lie group, compact homogeneous manifolds}

\begin{abstract}
In this note we study sharp sufficient conditions for the nuclearity of Fourier integral operators on $L^p$-spaces, $1< p\leq 2$.  Our conditions and those presented in Cardona \cite{FIO} provide a systematic investigation on the subject for all $1<p<\infty.$
\textbf{MSC 2010.} Primary {35S30; Secondary 58J40}.
\end{abstract} \maketitle

\section{Introduction}
In this note we are interested in those sharp conditions providing the nuclearity   of Fourier integral operators on Lebesgue spaces $L^p,$ for $1< p\leq 2$.  We complete our investigation on the subject for all $1<p<\infty,$ with this work, and those conditions proved in Cardona \cite{FIO} for $2\leq p<\infty.$   Fourier integral operators (FIOs) on $\mathbb{R}^n,$ are integral operators  of the form
\begin{equation}\label{definition}
Ff(x):=\int\limits_{\mathbb{R}^n}e^{i\phi(x,\xi)}a(x,\xi)\widehat{f}(\xi)d\xi,
\end{equation}
where $\mathscr{F}_{ }f:=\widehat{f}$ is the Fourier transform of  $f\in\mathscr{S}(\mathbb{R}^n).$ 
As it is well known, properties of FIOs on functions spaces   are used to 
study solutions to Cauchy problems (see H\"ormander \cite{Hor71,Hor1,Hor2} and Duistermaat and H\"ormander \cite{DuiHor}) for hyperbolic equations.  

By following  the theory of FIOs developed by H\"ormander \cite{Hor71}, the phase functions $\phi$  are positively homogeneous
of order 1 and they are considered smooth at $\xi\neq 0,$  while the  symbols are considered satisfying estimates of the form
\begin{equation}
\sup_{(x,y)\in K}|\partial_x^\beta\partial^\alpha_\xi a(x,y,\xi)|\leq C_{\alpha,\beta,K}(1+|\xi|)^{\kappa-|\alpha|},
\end{equation}
for every compact subset $K$ of $\mathbb{R}^{2n}.$  The action of Fourier integral operators on   $L^p$ spaces can be found in the references
H\"ormander \cite{Hor71}, Eskin\cite{Eskin},
Seeger, Sogge and Stein\cite{SSS91},
Tao\cite{Tao}, 
 Miyachi \cite{Miyachi}, Peral\cite{Peral}, Asada and Fujiwara\cite{AF}, Fujiwara\cite{Fuji}, Kumano-go\cite{Kumano-go}, Coriasco and Ruzhansky \cite{CoRu1,CoRu2}, Ruzhansky and Sugimoto \cite{RuzSugi01,RuzSugi02,RuzSugi03,RuzSugi}, Ruzhansky \cite{M. Ruzhansky}, and Ruzhansky and Wirth \cite{RWirth} where the problem has been extensively investigated.

In this note our main goal  is to provide sufficient conditions  for the $r$-nuclearity of Fourier integral operators on $L^p$-spaces. This problem has been considered in the case of pseudo-differential operators  by several authors and for FIOs in Cardona \cite{FIO}. Now, we present a summary of the works on the subject. Sufficient conditions guarantying  the nuclearity  of pseudo-differential operators on  $\mathbb{S}^1,\mathbb{Z},$ arbitrary compact Lie groups and   (closed) compact manifolds can be found  in the works of Delgado, Ruzhansky, Wong \cite{DR,DR1,DR3,DR5,DRboundedvariable2,DW} and Cardona \cite{Cardona}; similar conditions on different functions spaces  can be found  in the works Delgado and Ruzhansky\cite{DRboundedvariable,DRB,DRB2}.  Finally, the subject was treated for  compact manifolds with boundary by Delgado, Ruzhansky, and Tokmagambetov in \cite{DRTk}. 

Our work is motivated by the recent works of Ghaemi,  Jamalpour Birgani, and  Wong in \cite{Ghaemi,Ghaemi2,Majid} for $\mathbb{S}^1,\mathbb{Z}$ and also for arbitrary compact and Hausdorff groups. In these references the authors have characterized the nuclearity of pseudo-differential operators by showing that symbols associated to nuclear pseudo-differential operators admit a suitable decomposition where the spatial and momentum variables appear separately. In the compact case, the main tool for providing these characterizations is the fact that the unitary dual of compact and Hausdorff groups is a discrete set. This situation is different for the case of operators on $\mathbb{R}^n$ because the unitary dual of $\widehat{\mathbb{R}}^n\equiv\{ e^{i2\pi x\cdot\xi}:\xi\in\mathbb{R}^n \}  $ is merely continuous. With this in mind, the techniques used in the euclidean case, could be slightly different or as in the proof of our main theorem far from of those used in  \cite{Ghaemi,Ghaemi2,Majid}.

In order to present our main result we recall of notion of $r$-nuclearity of Grothendieck.  By following  \cite{GRO}, a densely defined  linear operator $T:D(T)\subset E\rightarrow F$  (where $D(T)$ is the domain of $T,$ and $E,F$ are  Banach spaces) extends to a  $r$-nuclear operator from $E$ into $F$, if
there exist  sequences $(e_n ')_{n\in\mathbb{N}_0}$ in $ E'$ (the dual space of $E$) and $(y_n)_{n\in\mathbb{N}_0}$ in $F$ such that, the discrete representation
\begin{equation}\label{nuc}
Tf=\sum_{n\in\mathbb{N}_0} \langle e_n',f\rangle y_n,\,\,\, \textnormal{ with }\,\,\,\sum_{n\in\mathbb{N}_0} \Vert e_n' \Vert^r_{E'}\Vert y_n \Vert^r_{F}<\infty,
\end{equation} holds true for all $f\in D(T).$
\noindent The class of $r-$nuclear operators is usually endowed with the natural semi-norm
\begin{equation}
n_r(T):=\inf\left\{ \left\{\sum_n \Vert e_n' \Vert^r_{E'}\Vert y_n \Vert^r_{F}\right\}^{\frac{1}{r}}: T=\sum_n e_n'\otimes y_n \right\}.
\end{equation}
If $r=1$, $n_1(\cdot)$ is a norm and we obtain the ideal of nuclear operators. In addition, when $E=F$ is a Hilbert space $H$ and  $r=1$ the definition above agrees with that of   trace class operators. For the case of Hilbert spaces $H$, the set of $r$-nuclear operators agrees with the Schatten-von Neumann class of order $r$ (see Pietsch  \cite{P,P2}).

An important notion associated to the theory of $r$-nuclear operators is that of trace.
If we choose a $r$-nuclear  operator  $T:E\rightarrow E$, $0<r\leq 1,$ with the Banach space $E$ satisfying the Grothendieck approximation property (see Grothendieck\cite{GRO}),  then 
 the nuclear trace of $T$ is (a  well-defined functional) given by
$$
\textnormal{Tr}(T)=\sum_{n\in\mathbb{N}^n_0}e_n'(f_n).
$$

In order to illustrate our results about the $r$-nuclearity and the nuclear trace of Fourier integral operators, allow us to recall the following criterion, (see Cardona \cite{FIO}). Throughout this document the phase function $\phi$ will be considered real valued and measurable.
\begin{theorem}
 Let  $0<r\leq 1.$ Let $a(\cdot,\cdot)$ be a symbol such that $a(x,\cdot)\in L^1_{loc}(\mathbb{R}^n),$ $a.e.w.,$  $x\in \mathbb{R}^n.$ Let $2\leq p_1<\infty,$   $1\leq p_2<\infty,$  and let $F$ be the Fourier integral operator associated to $a(\cdot,\cdot).$ Then, $F:L^{p_1}(\mathbb{R}^n)\rightarrow L^{p_2}(\mathbb{R}^n)$ is $r$-nuclear, if and only if,  the symbol $a(\cdot,\cdot)$  admits a decomposition of the form
\begin{equation}
a(x,\xi)=\frac{1}{e^{i\phi(x,\xi)}}\sum_{k=1}^{\infty}h_{k}(x)(\mathscr{F}^{-1}{g}_k)(\xi),\,\,\,a.e.w.,\,\,(x,\xi),
\end{equation} where   $\{g_k\}_{k\in\mathbb{N}}$ and $\{h_k\}_{k\in\mathbb{N}}$ are sequences of functions satisfying 
\begin{equation}
\mathbb{E}^r(g,f):=\sum_{k=0}^{\infty}\Vert g_k\Vert^r_{L^{p_1'}}\Vert h_{k}\Vert^r_{L^{p_2}}<\infty.
\end{equation} In this case, the nuclear trace of $F$ is given by
$$\textnormal{Tr}(F)=\int\limits_{\mathbb{R}^n}\int\limits_{\mathbb{R}^n}   e^{i\phi(x,\xi)-2\pi ix\cdot \xi}a(x,\xi)d\xi\,dx.$$
\end{theorem}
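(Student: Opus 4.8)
The plan is to deduce both implications directly from the definition of $r$-nuclearity after carrying everything to the frequency side by the multiplication formula $\int\widehat{f}(\xi)g(\xi)\,d\xi=\int f(x)\widehat{g}(x)\,dx$; the only genuine analytic input will be the Hausdorff--Young inequality, which is available precisely because $2\leq p_1<\infty$, i.e. $1<p_1'\leq 2$. For the sufficiency I would start from $a(x,\xi)=e^{-i\phi(x,\xi)}\sum_{k}h_k(x)(\mathscr{F}^{-1}g_k)(\xi)$ with $\mathbb{E}^r(g,f)<\infty$: by Hausdorff--Young each $\mathscr{F}^{-1}g_k$ lies in $L^{p_1}(\mathbb{R}^n)$ with $\Vert\mathscr{F}^{-1}g_k\Vert_{L^{p_1}}\leq\Vert g_k\Vert_{L^{p_1'}}$, and since $0<r\leq 1$ gives $\ell^r\hookrightarrow\ell^1$ one has $\sum_k\Vert g_k\Vert_{L^{p_1'}}\Vert h_k\Vert_{L^{p_2}}<\infty$, so in particular $\sum_k\Vert g_k\Vert_{L^{p_1'}}|h_k(x)|<\infty$ for a.e.\ $x$. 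Fixing such an $x$ and $f\in\mathscr{S}(\mathbb{R}^n)$, I would substitute the expansion of $a$ into $Ff(x)=\int e^{i\phi(x,\xi)}a(x,\xi)\widehat{f}(\xi)\,d\xi$, interchange sum and integral by dominated convergence (using $\widehat{f}\in L^{p_1'}$), and apply $\int\widehat{f}(\xi)(\mathscr{F}^{-1}g_k)(\xi)\,d\xi=\int f(y)g_k(y)\,dy$ to obtain $Ff(x)=\sum_k\langle g_k,f\rangle\,h_k(x)$, where $\langle g_k,f\rangle=\int_{\mathbb{R}^n}g_k(y)f(y)\,dy$. Since $p_1<\infty$ we identify $(L^{p_1})'=L^{p_1'}$, so each $g_k$ is a functional of norm $\Vert g_k\Vert_{L^{p_1'}}$ and $h_k\in L^{p_2}$; the resulting series then extends $F$ to an $r$-nuclear operator $L^{p_1}\to L^{p_2}$ with $n_r(F)^r\leq\mathbb{E}^r(g,f)$.

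For the necessity I would assume $F:L^{p_1}\to L^{p_2}$ is $r$-nuclear and, using $(L^{p_1})'=L^{p_1'}$ once more, write the defining representation as $Ff=\sum_k\langle g_k,f\rangle h_k$ with $g_k\in L^{p_1'}$, $h_k\in L^{p_2}$, $\sum_k\Vert g_k\Vert^r_{L^{p_1'}}\Vert h_k\Vert^r_{L^{p_2}}<\infty$, valid for every $f\in\mathscr{S}(\mathbb{R}^n)$. Rewriting $\langle g_k,f\rangle=\int\widehat{f}(\xi)(\mathscr{F}^{-1}g_k)(\xi)\,d\xi$ and interchanging sum and integral as above, I get $\int e^{i\phi(x,\xi)}a(x,\xi)\widehat{f}(\xi)\,d\xi=\int\bigl(\sum_k h_k(x)(\mathscr{F}^{-1}g_k)(\xi)\bigr)\widehat{f}(\xi)\,d\xi$ for a.e.\ $x$ and all $f\in\mathscr{S}$. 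Next I would note that for a.e.\ $x$ the function $\xi\mapsto e^{i\phi(x,\xi)}a(x,\xi)-\sum_k h_k(x)(\mathscr{F}^{-1}g_k)(\xi)$ is locally integrable --- the series by Hausdorff--Young and the a.e.\ finiteness of $\sum_k\Vert g_k\Vert_{L^{p_1'}}|h_k(x)|$, the first term because $|e^{i\phi}|=1$ and $a(x,\cdot)\in L^1_{loc}$ --- and that it integrates to zero against $\widehat{f}$ for every $f\in\mathscr{S}$; since the Fourier transform is a bijection of $\mathscr{S}(\mathbb{R}^n)$, it therefore annihilates $\mathscr{S}(\mathbb{R}^n)$ and hence vanishes a.e.\ in $\xi$. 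By Fubini the equality then holds a.e.\ in $(x,\xi)$, and multiplying by $e^{-i\phi(x,\xi)}$ produces the asserted decomposition, with $\mathbb{E}^r(g,f)$ finite (the sum attached to this representation, hence $\leq n_r(F)^r$).

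For the nuclear trace, using that $L^p(\mathbb{R}^n)$ has the approximation property and taking $p_1=p_2=p$ (the case in which $F$ is an endomorphism), Grothendieck's trace formula applied to the representation above gives $\textnormal{Tr}(F)=\sum_k\langle g_k,h_k\rangle=\sum_k\int_{\mathbb{R}^n}g_k(x)h_k(x)\,dx$. I would then insert $g_k(x)=\int_{\mathbb{R}^n}e^{-2\pi ix\cdot\xi}(\mathscr{F}^{-1}g_k)(\xi)\,d\xi$ (Fourier inversion) and interchange summation with integration --- legitimate because $\sum_k\Vert g_k\Vert_{L^{p'}}\Vert h_k\Vert_{L^{p}}\leq(\mathbb{E}^r(g,f))^{1/r}<\infty$ --- to reach $\textnormal{Tr}(F)=\int_{\mathbb{R}^n}\int_{\mathbb{R}^n}e^{-2\pi ix\cdot\xi}\bigl(\sum_k h_k(x)(\mathscr{F}^{-1}g_k)(\xi)\bigr)\,d\xi\,dx=\int_{\mathbb{R}^n}\int_{\mathbb{R}^n}e^{i\phi(x,\xi)-2\pi ix\cdot\xi}a(x,\xi)\,d\xi\,dx$, the last equality by the decomposition and the double integral being read as the iterated integral in this order.

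The hard part will be the necessity direction: one must turn the abstract hypothesis that $F$ is $r$-nuclear into a pointwise factorization of its symbol, and the only available mechanism is to test the two representations of $Ff$ against all $\widehat{f}$, $f\in\mathscr{S}$, and invoke that a locally integrable function annihilating $\mathscr{S}(\mathbb{R}^n)$ must be null. The single place where the hypothesis $2\leq p_1<\infty$ is genuinely used is in guaranteeing $\mathscr{F}^{-1}g_k\in L^{p_1}\subset L^1_{loc}$, i.e. the local integrability of the defining series, through Hausdorff--Young; for $1<p_1<2$ this argument collapses, which is exactly why a separate treatment --- the content of the present note --- is required. The remaining steps, namely the interchanges of sum and integral, the inclusion $\ell^r\subset\ell^1$, and the two uses of the multiplication formula, should be routine.
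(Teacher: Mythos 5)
Your proposal is correct and follows essentially the same route as the paper: the decomposition of the symbol is nothing but the Fourier transform (in the second variable) of the kernel factorization $K(x,y)=\sum_k h_k(x)g_k(y)$ that characterizes $r$-nuclearity, with Hausdorff--Young for $1<p_1'\leq 2$ supplying that each $\mathscr{F}^{-1}g_k$ is a genuine $L^{p_1}$ function, and the trace formula obtained by the same formal insertion of Fourier inversion. The only difference is presentational --- you re-derive the relevant case of Delgado's kernel criterion (Theorem \ref{Theorem1}) directly from the definition of nuclearity, duality $(L^{p_1})'=L^{p_1'}$ and the multiplication formula, rather than citing it; just note that in the necessity step the null set in $Ff(x)=\sum_k\langle g_k,f\rangle h_k(x)$ a priori depends on $f$, so one should pass to a countable dense family of test functions before concluding the a.e.\ identity of the two kernels.
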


As a complement of the previous result we present our main theorem where we consider the $r$-nuclearity of $F$ from $L^{p_1}$ into $L^{p_2}$ for $1<p_1\leq 2.$

\begin{theorem}\label{MainTheorem}
 Let  $0<r\leq 1.$ Let $a(\cdot,\cdot)$ be a measurable on  $ \mathbb{R}^{2n}.$ Let $1<p_1\leq 2,$   $1\leq p_2<\infty,$ and  $F$ be the Fourier integral operator associated to $a(\cdot,\cdot).$ Then, $F:L^{p_1}(\mathbb{R}^n)\rightarrow L^{p_2}(\mathbb{R}^n)$ is $r$-nuclear if  the symbol $a(\cdot,\cdot)$  admits a decomposition of the form
\begin{equation}\label{symboldecompositionT}
a(x,\xi)=\frac{1}{e^{i\phi(x,\xi)}}\sum_{k=1}^{\infty}h_{k}(x){g}_k(\xi),\,\,\,a.e.w.,\,\,(x,\xi),
\end{equation} where   $\{g_k\}_{k\in\mathbb{N}}$ and $\{h_k\}_{k\in\mathbb{N}}$ are sequences of functions satisfying 
\begin{equation}\label{cond1}
\mathbb{E}^r(g,f):=\sum_{k=0}^{\infty}\Vert g_k\Vert^r_{L^{p_1}}\Vert h_{k}\Vert^r_{L^{p_2}}<\infty.
\end{equation} This theorem is sharp in the sense that the previous condition is a necessary and sufficient condition for the $r$-nuclearity of $F$ when $p_1=2.$ 
\end{theorem}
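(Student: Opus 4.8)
The plan is to substitute the factorisation \eqref{symboldecompositionT} into the defining formula \eqref{definition}, to read off a discrete representation of the type \eqref{nuc}, and to estimate the norms of its factors by the Hausdorff--Young inequality; the reversibility of that inequality at $p_1=2$ will yield the sharpness. First I would substitute \eqref{symboldecompositionT} into \eqref{definition}: the oscillatory factors cancel, since $e^{i\phi(x,\xi)}e^{-i\phi(x,\xi)}=1$, and, interchanging summation and integration,
\[
Ff(x)=\sum_{k=1}^{\infty}h_k(x)\int_{\mathbb{R}^n}g_k(\xi)\widehat{f}(\xi)\,d\xi .
\]
To justify the interchange I would observe that, for $1<p_1\leq 2$, one has $\widehat f\in L^{p_1'}(\mathbb{R}^n)$ by Hausdorff--Young, and that for $0<r\leq 1$ condition \eqref{cond1} forces $\big(\Vert g_k\Vert_{L^{p_1}}\Vert h_k\Vert_{L^{p_2}}\big)_k\in\ell^r\subset\ell^1$; Hölder's inequality then shows the $k$-series converges absolutely in $L^{p_2}(\mathbb{R}^n)$ and a.e., which makes the displayed identity rigorous and fixes the meaning of $F$ on $L^{p_1}$.

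Next I would convert the inner integral into a functional of $f$. By the multiplication formula $\int_{\mathbb{R}^n}\widehat{u}\,v=\int_{\mathbb{R}^n}u\,\widehat{v}$ (first for Schwartz functions, then by density, using $g_k,f\in L^{p_1}$ and $\mathscr{F}g_k,\widehat f\in L^{p_1'}$), taken with $u=g_k$ and $v=f$,
\[
\int_{\mathbb{R}^n}g_k(\xi)\widehat{f}(\xi)\,d\xi=\int_{\mathbb{R}^n}(\mathscr{F}g_k)(x)\,f(x)\,dx=\langle \mathscr{F}g_k,f\rangle ,
\]
so that $Ff=\sum_{k=1}^{\infty}\langle \mathscr{F}g_k,f\rangle\,h_k$ --- precisely a representation of the form \eqref{nuc} with $e_k':=\mathscr{F}g_k\in (L^{p_1})'=L^{p_1'}(\mathbb{R}^n)$ and $y_k:=h_k\in L^{p_2}(\mathbb{R}^n)$. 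The $r$-nuclearity then reduces to finiteness of $\sum_k\Vert \mathscr{F}g_k\Vert_{L^{p_1'}}^r\Vert h_k\Vert_{L^{p_2}}^r$, and this is the only place an analytic fact is used: since $1<p_1\leq 2$, Hausdorff--Young gives $\Vert \mathscr{F}g_k\Vert_{L^{p_1'}}\leq \Vert g_k\Vert_{L^{p_1}}$ uniformly in $k$, hence
\[
\sum_{k=1}^{\infty}\Vert \mathscr{F}g_k\Vert_{L^{p_1'}}^r\Vert h_k\Vert_{L^{p_2}}^r\leq \sum_{k=1}^{\infty}\Vert g_k\Vert_{L^{p_1}}^r\Vert h_k\Vert_{L^{p_2}}^r<\infty
\]
by \eqref{cond1}, and $F:L^{p_1}(\mathbb{R}^n)\to L^{p_2}(\mathbb{R}^n)$ is $r$-nuclear.

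For the sharpness I would specialise to $p_1=2$, where $p_1'=2$ and $\mathscr{F}^{-1}$ is an isometry of $L^2$. Here condition \eqref{cond1} is also necessary, which is exactly the content of the criterion recalled above (see \cite{FIO}) at $p_1=2$; concretely, one runs the argument above in reverse, using $(L^2)'=L^2$: from an $r$-nuclear representation $Ff=\sum_k\langle e_k',f\rangle y_k$ one recovers \eqref{symboldecompositionT} by putting $g_k:=\mathscr{F}^{-1}e_k'$ and $h_k:=y_k$ (the symbol being determined a.e. by $F$ under the integrability in force), with $\sum_k\Vert g_k\Vert_{L^2}^r\Vert h_k\Vert_{L^{p_2}}^r=\sum_k\Vert e_k'\Vert_{L^2}^r\Vert y_k\Vert_{L^{p_2}}^r<\infty$. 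The reason only an implication survives for $1<p_1<2$ is precisely that this reversal fails: there $p_1'>2$, so $\mathscr{F}^{-1}$ no longer maps $L^{p_1'}$ back into a convenient space.

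I do not expect a serious obstacle. Hausdorff--Young (and, at $p_1=2$, its reversibility via Plancherel) carries all of the analytic content; the remaining effort is the routine bookkeeping of justifying the interchange of summation and integration through $\ell^r\subset\ell^1$, of transferring the Fourier transform between $f$ and the $g_k$'s by the multiplication formula, and --- for the necessity at $p_1=2$ --- of checking the almost-everywhere uniqueness of the symbol associated with a given operator.
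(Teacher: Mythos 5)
Your argument is correct and is essentially the paper's proof: both hinge on the Hausdorff--Young inequality to transfer the $L^{p_1}$ bound on $g_k$ to an $L^{p_1'}=(L^{p_1})'$ bound on $\mathscr{F}g_k$, and on Plancherel's theorem for the sharpness claim at $p_1=2$. The only difference is presentational --- the paper factorizes $F=K_\sigma\circ\mathscr{F}$ and invokes the ideal property of $r$-nuclear operators together with Delgado's kernel criterion, whereas you inline that same computation by exhibiting the Grothendieck representation $Ff=\sum_k\langle\mathscr{F}g_k,f\rangle h_k$ directly from the definition.
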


\begin{remark} Note that from Reinov and Latif \cite{O}, the nuclear trace of every $r$-nuclear operator on $L^p(\mathbb{R}^n),$ agrees with its spectral trace provided that $1/r=1+|1/p-1/2|.$ If $F$ is a nuclear operator on $L^2(\mathbb{R}^n)$ we have, $$\textnormal{Tr}(F)=\int\limits_{\mathbb{R}^n}\int\limits_{\mathbb{R}^n}e^{i\phi(x,\eta)-i2\pi x\cdot\eta}a(x,\eta)d\eta d\xi =\sum_{n=0}^\infty\lambda_n(F),$$ where $\lambda_n(F),$ $n\in\mathbb{N}_0,$ is the sequence of eigenvalues of $F$ (counting multiplicity).
\end{remark}

\section{Proof of the main result}\label{Rnn}
In this section we prove our main result for Fourier integral operators $F$ defined as in \eqref{definition}. First, let us observe that every FIO $F$ has a integral representation with kernel l $K(x,y).$ In fact, straightforward computation shows us that
\begin{equation}\label{kernelpseudo}
Ff(x) =\int\limits_{\mathbb{R}^n}K(x,y)f(y)dy,
 \,\, K(x,y):=\int_{\mathbb{R}^n}e^{i\phi(x,\xi)-i2\pi y\cdot \xi}a(x,\xi)d\xi, \end{equation}
for every  $f\in\mathscr{D}(\mathbb{R}^n).$ In order to analyze the $r$-nuclearity of the Fourier integral operator  $F$ we study  its kernel $K,$ by using as fundamental tool, the following theorem (see J. Delgado \cite{Delgado,D2}).
\begin{theorem}[Delgado Theorem]\label{Theorem1} Let us consider $1\leq p_1,p_2<\infty,$ $0<r\leq 1$ and let $p_i'$ be such that $\frac{1}{p_i}+\frac{1}{p_i'}=1.$ Let $(X_1,\mu_1)$ and $(X_2,\mu_2)$ be $\sigma$-finite measure spaces. An operator $T:L^{p_1}(X_1,\mu_1)\rightarrow L^{p_2}(X_2,\mu_2)$ is $r$-nuclear if and only if there exist sequences $(h_k)_k$ in $L^{p_2}(\mu_2),$ and $(g_k)$ in $L^{p_1'}(\mu_1),$ such that
\begin{equation}
\sum_{k}\Vert h_k\Vert_{L^{p_2}}^r\Vert g_k\Vert^r_{L^{p_1'} } <\infty,\textnormal{        and        }Tf(x)=\int\limits_{X_1}(\sum_k h_{k}(x)g_k(y))f(y)d\mu_1(y),\textnormal{   a.e.w. }x,
\end{equation}
for every $f\in {L^{p_1}}(\mu_1).$ In this case, if $p_1=p_2,$ and $\mu_1=\mu_2,$ $($\textnormal{see Section 3 of} \cite{Delgado}$)$ the nuclear trace of $T$ is given by
\begin{equation}\label{trace1}
\textnormal{Tr}(T):=\int\limits_{X_1}\sum_{k}g_{k}(x)h_{k}(x)d\mu_1(x).
\end{equation}
\end{theorem}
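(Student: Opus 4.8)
The plan is to reduce the statement to an unwinding of Grothendieck's definition \eqref{nuc} once the dual of $L^{p_1}(X_1,\mu_1)$ has been identified. Since $1\le p_1<\infty$ and $\mu_1$ is $\sigma$-finite, the Riesz representation theorem furnishes an isometric isomorphism $(L^{p_1}(\mu_1))'\cong L^{p_1'}(\mu_1)$: every continuous functional $e'$ has the form $\langle e',f\rangle=\int_{X_1}g(y)f(y)\,d\mu_1(y)$ for a unique $g\in L^{p_1'}(\mu_1)$, with $\Vert e'\Vert_{(L^{p_1})'}=\Vert g\Vert_{L^{p_1'}}$. Under this identification an $r$-nuclear representation $T=\sum_k e_k'\otimes y_k$ becomes a concrete kernel built from the functions $g_k\in L^{p_1'}(\mu_1)$ representing the $e_k'$ and the elements $y_k=:h_k\in L^{p_2}(\mu_2)$, and the quantity $\sum_k\Vert e_k'\Vert^r\Vert y_k\Vert^r$ is exactly $\sum_k\Vert g_k\Vert^r_{L^{p_1'}}\Vert h_k\Vert^r_{L^{p_2}}$. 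Throughout I would use the elementary embedding $\ell^r\hookrightarrow\ell^1$, valid for $0<r\le1$, so that the summability hypothesis also yields $\sum_k\Vert g_k\Vert_{L^{p_1'}}\Vert h_k\Vert_{L^{p_2}}<\infty$.

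For the sufficiency direction I would start from a decomposition $Tf(x)=\int_{X_1}\big(\sum_k h_k(x)g_k(y)\big)f(y)\,d\mu_1(y)$ with the stated summability. Setting $\langle e_k',f\rangle:=\int_{X_1}g_k f\,d\mu_1$ and $y_k:=h_k$, Hölder's inequality gives $\Vert \langle e_k',f\rangle\,y_k\Vert_{L^{p_2}}\le \Vert g_k\Vert_{L^{p_1'}}\Vert f\Vert_{L^{p_1}}\Vert h_k\Vert_{L^{p_2}}$, so the series $\sum_k\langle e_k',f\rangle y_k$ converges absolutely in $L^{p_2}(\mu_2)$ and, by Fubini, agrees with $Tf$. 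Since $\sum_k\Vert e_k'\Vert^r\Vert y_k\Vert^r=\sum_k\Vert g_k\Vert^r_{L^{p_1'}}\Vert h_k\Vert^r_{L^{p_2}}<\infty$, this is exactly a representation of the type \eqref{nuc}, hence $T$ is $r$-nuclear.

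For the necessity direction I would take an abstract $r$-nuclear representation $Tf=\sum_k\langle e_k',f\rangle y_k$, Riesz-represent each $e_k'$ by $g_k\in L^{p_1'}(\mu_1)$, and set $h_k:=y_k$. The essential point is to manufacture an honest integral kernel. I would show that the partial sums $G_N(x):=\sum_{k\le N}h_k(x)g_k$ form a Cauchy sequence in the Bochner space $L^{p_2}\!\big(X_2;L^{p_1'}(X_1)\big)$, since $\Vert G_N-G_M\Vert_{L^{p_2}(L^{p_1'})}\le\sum_{M<k\le N}\Vert h_k\Vert_{L^{p_2}}\Vert g_k\Vert_{L^{p_1'}}\to0$; the limit $G$ then defines, through the canonical identification of this Bochner space with measurable functions on $X_2\times X_1$, a kernel $K(x,y)=\sum_k h_k(x)g_k(y)$ with $K(x,\cdot)\in L^{p_1'}(\mu_1)$ for a.e. $x$. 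Evaluating the duality $\langle G(x),f\rangle$ and passing the sum through the integral (legitimate by the absolute convergence above) yields $Tf(x)=\int_{X_1}K(x,y)f(y)\,d\mu_1(y)$ a.e., which is the asserted decomposition. \textbf{The main obstacle} is precisely this Bochner/Fubini step: identifying $L^{p_2}(X_2;L^{p_1'}(X_1))$ with genuine kernels on the product and controlling the measurability and a.e. convergence of $K$, with the degenerate case $p_1=1$ (so $p_1'=\infty$) demanding extra care.

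Finally, for the trace formula I would invoke that $L^p(\mu)$ enjoys the Grothendieck approximation property, so that for $p_1=p_2=p$ and $\mu_1=\mu_2=\mu$ every $r$-nuclear (hence, by $\ell^r\hookrightarrow\ell^1$, nuclear) operator $T$ carries a representation-independent trace $\textnormal{Tr}(T)=\sum_k\langle e_k',y_k\rangle=\sum_k\int_{X_1}g_k h_k\,d\mu$. Since $\sum_k\int_{X_1}|g_k h_k|\,d\mu\le\sum_k\Vert g_k\Vert_{L^{p'}}\Vert h_k\Vert_{L^{p}}<\infty$ by Hölder, I may interchange summation and integration to obtain $\textnormal{Tr}(T)=\int_{X_1}\sum_k g_k(x)h_k(x)\,d\mu(x)$, as claimed.
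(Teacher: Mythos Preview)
The paper does not supply its own proof of this statement: Theorem~\ref{Theorem1} is quoted as \emph{Delgado's Theorem} and attributed to \cite{Delgado,D2}, then used as a black box in the proof of Theorem~\ref{MainTheorem}. So there is nothing in the paper to compare your argument against.

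That said, your sketch is the natural line and is essentially correct. The sufficiency direction is immediate once the dual of $L^{p_1}$ is identified via Riesz, and your use of $\ell^r\hookrightarrow\ell^1$ to pass from $r$-summability to absolute summability is the right move. For necessity, your Bochner-space argument works cleanly when $p_1>1$ (so that $L^{p_1'}$ is separable and the identification of $L^{p_2}(X_2;L^{p_1'}(X_1))$ with measurable kernels on $X_2\times X_1$ is standard); you correctly flag $p_1=1$ as the delicate endpoint, where $L^\infty$ fails separability and one must argue more carefully---e.g.\ by noting that the partial kernels $\sum_{k\le N}h_k(x)g_k(y)$ are already honest measurable functions on the product and controlling a.e.\ convergence along a subsequence, or by working in the weak$^*$ topology. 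Your trace computation is fine: the approximation property of $L^p(\mu)$ guarantees well-definedness, and H\"older plus Tonelli justify the interchange.

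In short: the proposal is sound, and matches the spirit of Delgado's original arguments in \cite{Delgado,D2}; the paper itself simply cites the result rather than reproving it.
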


Now, curiously, we present the novelty of this paper that is the short proof of our main result. 

\begin{proof}[Proof of Theorem \ref{MainTheorem}]

Let us consider the Fourier integral operator $F,$ 
\begin{equation}
Ff(x):=\int\limits_{\mathbb{R}^n}e^{i\phi(x,\xi)}a(x,\xi)\widehat{f}(\xi)d\xi,
\end{equation}
with associated symbol $a.$ The main strategy for  the proof will be to analyze the natural factorization of $F$ in terms of the Fourier transform,
\begin{equation}
    (\mathscr{F}f)(\xi):=\int\limits_{\mathbb{R}^n}e^{-i2\pi x\cdot \xi}f(x)dx.
\end{equation} 
Clearly, if  we define the operator with kernel (associated to $\sigma=(\phi,a)$),
\begin{equation}
K_\sigma g(x)=\int\limits_{\mathbb{R}^n}e^{i\phi(x,\xi)}a(x,\xi)g(\xi),\,\,g\in\mathscr{S}(\mathbb{R}^n),\,\,\,K_\sigma(x,\xi)=e^{i\phi(x,\xi)}a(x,\xi),
\end{equation} then $
    F=K_\sigma \circ \mathscr{F}.$
Taking into account the Hausdorff-Young inequality, 
\begin{equation}
\Vert \mathscr{F}f\Vert_{L^{p_1'}(\mathbb{R}^n)}\leq \Vert f\Vert_{L^{p_1} (\mathbb{R}^n) },
\end{equation} 
 the Fourier transform extends to a bounded operator from $L^{p_1}(\mathbb{R}^n)$ into $L^{p_1'}(\mathbb{R}^n).$  So, if we prove that the condition \eqref{cond1} assures the $r$-nuclearity of $K_\sigma$ from $L^{p_1'}(\mathbb{R}^n) $ into $L^{p_2}(\mathbb{R}^n) ,$ we can deduce the $r$-nuclearity of $F$ from $L^{p_1}(\mathbb{R}^n) $ into $L^{p_2}(\mathbb{R}^n) .$  Here, we will be using that the class of $r$-nuclear operators is a bilateral ideal on the set of bounded operators between Banach spaces.
 
 Now, from Theorem \ref{Theorem1}, $K_\sigma:L^{p_1'}(\mathbb{R}^n)\rightarrow L^{p_2}(\mathbb{R}^n) $ is $r$-nuclear, if and only if, there exist sequences $\{h_k\},\{g_k\}$ satisfying
 \begin{equation}\label{1111}
    K_\sigma(x,\xi)= e^{i\phi(x,\xi)}a(x,\xi)=\sum_{k}h_k(x)g_{k}(\xi),
 \end{equation} where \begin{equation}
\sum_{k}\Vert h_k\Vert_{L^{p_2}}^r\Vert g_k\Vert^r_{L^{p_1} } <\infty,\textnormal{        and        }K_\sigma f(x)=\int\limits_{\mathbb{R}^n}(\sum_k h_{k}(x)g_k(\xi))g(\xi)d\xi,\textnormal{   a.e.w. }x,
\end{equation}
for every $g\in {L^{p_1'}}(\mathbb{R}^n).$ Here, we have used that for $1<p_1\leq 2,$  $L^{p_1''}(\mathbb{R}^n)=L^{p_1}(\mathbb{R}^n).$ We end the proof by observing that \eqref{1111} is in turns   equivalent  to \eqref{symboldecompositionT}.
\end{proof}   
\begin{remark}[Sharpness of Theorem \ref{Theorem1}] Let us note, that from Plancherel Theorem, $\mathscr{F}:L^2(\mathbb{R}^n)\rightarrow L^2(\mathbb{R}^n)$ extends to an isomorphism of Hilbert spaces. In the proof of Theorem \ref{Theorem1}, $\mathscr{F}$ is an invertible and bounded operator for $p_1=2.$ Hence, from the relation $F=K_\sigma\circ \mathscr{F},$ we deduce that $F$ is $r$-nuclear if and only if $K_\sigma=F\circ \mathscr{F}^{-1}$ is also $r$-nuclear.
But, $K_\sigma$ is $r$-nuclear from $L^{2}$ into $L^2$  if and only if \eqref{symboldecompositionT} holds true. So, for $p_1=2,$ \eqref{symboldecompositionT} characterizes the $r$-nuclearity of $F$ by showing that it is a necessary and sufficient condition for this fact.
\end{remark}

\bibliographystyle{amsplain}

\end{document}